\documentclass{amsart}
\usepackage{amssymb}
\usepackage{amsfonts}
\usepackage{graphicx}
\usepackage{color}
\usepackage{float}
\usepackage{enumitem}

\setcounter{MaxMatrixCols}{10}

\newtheorem{theorem}{Theorem}[section]

\newtheorem{notation}[theorem]{Notation}
\newtheorem{corollary}[theorem]{Corollary}
\theoremstyle{definition}
\newtheorem{definition}[theorem]{Definition}
\newtheorem{example}[theorem]{Example}

\theoremstyle{proposition}
\newtheorem{proposition}[theorem]{Proposition}
\theoremstyle{remark}
\newtheorem{remark}[theorem]{Remark}
\numberwithin{equation}{section}

\def\w{\omega}

\def\Z{\mathbb{Z}}
\def\R{\mathbb{R}}

\newcommand{\dsc}{\rm \bf d}
\newcommand{\Ann}{\rm Ann}

\begin{document}
\title{Riesz bases of exponentials and the Bohr topology}
\author{Carlos Cabrelli}
\address{ Departamento de Matem\'atica, Universidad de Buenos Aires,
Instituto de Matem\'atica ``Luis Santal\'o'' (IMAS-CONICET-UBA), Buenos
Aires, Argentina}
\email{cabrelli@dm.uba.ar}
\author{Kathryn Hare}
\address{ Department of Pure Mathematics, University of Waterloo, Waterloo,
Canada}
\email{kehare@uwaterloo.ca}
\author{Ursula Molter}
\address{ Departamento de Matem\'atica, Universidad de Buenos Aires,
Instituto de Matem\'atica ``Luis Santal\'o'' (IMAS-CONICET-UBA), Buenos
Aires, Argentina}
\email{umolter@dm.uba.ar}
\subjclass[2010]{Primary 42B99, 42C15; Secondary 42A10, 05B45, 42A15}
\date{}
\dedicatory{}

\begin{abstract}
We provide a necessary and sufficient condition to ensure that a multi-tile $%
\Omega \subset \mathbb{R}^{d}$ of positive measure (but not
necessarily bounded) admits a structured Riesz basis of exponentials for $%
L^{2}(\Omega )$. New examples are given and this characterization is
generalized to abstract locally compact abelian groups.
\end{abstract}

\maketitle

\footnotetext{%
The research of the authors is partially supported by Grants: CONICET PIP
11220110101018, PICT-2014-1480, UBACyT 20020130100403BA, UBACyT
20020130100422B and NSERC 2016-03719.}

\paragraph{Keywords:}

{Riesz basis, basis of exponentials, multi-tiles, Bohr compactification,
Bohr topology.}

\section{Introduction}

We address the question of   what domains $\Omega$  of $\R^d$ with finite measure, admit a Riesz basis 
of exponentials, that is, the existence of a discrete set $\mathcal{B}$ in $\R^d$ such that
the exponentials ${E}(\mathcal{B}) = \{e^{2\pi i\beta\cdot \w}: \beta \in \mathcal{B}\}$ form a Riesz basis of $L^2(\Omega).$

Recently, it has been shown that if a bounded set $\Omega$ satisfies a certain geometrical condition called 
multi-tiling, (see definition below), then $L^2(\Omega)$ always posseses a  Riesz basis of exponentials ${E}(\mathcal {B})$, 
where $\mathcal{B}$ has the special structure of being the union of a finite number of translates of a lattice.
Every set $\Omega$ that has such a structured Riesz basis must be a multi-tile, however
if $\Omega$ it is not bounded, it does not necessarily have such a basis.
This opened the question of  which unbounded multi-tiles support a structured Riesz basis.
Even though some sufficient conditions were given on  a multi-tile $\Omega$ to guarantee the existence of such a basis,
the question of characterizing all the multi-tiles that support a structured Riesz basis remained open.

The aim of this paper is to fill this gap. We have found a necessary and sufficient condition for a multi-tile to 
possess  a structured Riesz basis of exponentials. We also extend our results to the context of abstract locally compact abelian groups.

In what follows we give some background on the subject and the precise definitions.

Let $\Omega \subset \mathbb{R}^{d}$ be a measurable set of positive and
finite measure. A central question in Harmonic Analysis is whether there
exists a discrete set $\Lambda \subset \mathbb{R}^{d}$ such that the system
of exponentials, $E(\Lambda )=\{e^{2\pi i\lambda \cdot \omega }:\lambda \in
\Lambda \},$ is a basis of $L^{2}(\Omega )$. When the answer is positive we
have a representation of each function in the space as a non-harmonic
Fourier series, c.f. \cite{You01}. If $E(\Lambda )$ is an orthogonal basis, $%
\Omega $ is called \emph{spectral} and $\Lambda $ a \emph{spectrum} for $%
\Omega $. The spectrum of a spectral set is a complete set of stable
interpolation of the Paley-Wiener space $PW_{\Omega },$ \cite{OU16}, a fact
important in applications.

The study of spectral sets has flourished since Fuglede's conjecture that a
set is spectral if and only if it tiles $\mathbb{R}^{d}$ by translations, 
\cite{Fug74}. By this, we mean that there exists a discrete set $S\subset 
\mathbb{R}^{d}$ such that the translates of $\Omega $ by the elements of $S$
are almost disjoint and cover $\mathbb{R}^{d}$ up to a set of measure zero. Equivalently, 
\begin{equation*}
\sum_{s\in S}\chi _{\Omega }(\omega -s)=1\text{ for a.e. }\omega .
\end{equation*}%
Fuglede's conjecture has motivated significant research and has been proven
to be true in some special cases. In particular, in \cite{Fug74} (and see
also \cite{Iosxx}) Fuglede showed that when $\Lambda $ is a lattice the
conjecture holds. To be precise, a set $\Omega $ tiles $\mathbb{R}^{d}$ by
translations along a lattice if and only if the dual lattice is a spectrum
for $\Omega .$ On the other hand, the conjecture is false in dimensions
greater than two and some simple domains are not spectral; see \cite{Fug74, 
Kol00,Kol01,Lab01} for examples.

If the orthogonality requirement is dropped, we can ask if $\Omega $
supports a Riesz basis of exponentials: Put $e_{\beta }(\omega )=e^{2\pi
i\beta .\omega }$. The system $\{e_{\beta }:\beta\in \mathcal{B} \}$ is a 
\emph{Riesz basis} of $L^{2}(\Omega )$ if it is complete and there are
constants $A,B>0$ so that 
\begin{equation*}
A\sum_{\beta\in \mathcal{B}}|\,c_{\beta}\,|^{2}\leq \,\Big{\|}\,\sum_{\beta
\in \mathcal{B} }c_{\beta }e_{\beta }\,\Big{\|} ^{2} \leq B\sum_{\beta\in 
\mathcal{B} }|\,c_{\beta }\,|^{2},\quad \forall \,\{c_{\beta }\}\in \ell
^{2}(\mathcal{B}).
\end{equation*}%
If there exists a set $\mathcal{B} $ such that the set of vectors $%
\{e_{\beta }:\beta \in \mathcal{B} \}$ is a Riesz basis of $L^{2}(\Omega )$
we say that $\Omega $ is \emph{Riesz spectral} and $\mathcal{B} $ is a \emph{%
Riesz spectrum for} $\Omega .$ When a Riesz basis of exponentials exists,
any element of $L^{2}(\Omega )$ can be represented as an unconditional
series of exponentials. Also, Riesz spectrums are complete sets of stable
interpolation. 

The problem of the existence of Riesz bases, is notably different from the
orthogonal case. For example, it is not known if there exist any sets that 
\emph{are not }Riesz spectral. On the other hand, there are relatively few
classes of examples of domains that have been proven to be Riesz spectral;
see \cite{GL18,KN15,KN16,Mar06}.

\subsection{Multi-tiles}

One class of examples that are Riesz spectral comes from  some of the so-called
multi-tile sets, generalizations of tiles.

\begin{definition}
Let $\Lambda $ be a full lattice in $\mathbb{R}^{d}$ and let $k\in \mathbb{N}
$. A measurable set $\Omega \subset \mathbb{R}^{d}$ is called a \emph{$k$%
-tile for $\Lambda ,$} or a \emph{multi-tile of order $k$ with respect to $%
\Lambda $}, if 
\begin{equation}
\sum_{\lambda \in \Lambda }\chi _{\Omega }(\omega -\lambda )=k\text{ for
a.e. }\omega .
\end{equation}
\end{definition}

Kolountzakis proved in \cite{Kol15} that a $k$-tile for $\Lambda $ is a
disjoint union of $k$ sets of representatives of $\mathbb{R}^{d}/\Lambda $
(up to measure zero), hence a $k$-fold covering of $\mathbb{R}^{d}$ by
translations by elements of $\Lambda $.

It was shown in \cite{GL14} and \cite{Kol15} that if $\Omega \subset \mathbb{%
R}^{d}$ is a bounded $k$-tile for a lattice $\Lambda $, then there always
exist vectors $a_{1},\dots ,a_{k}$ such that $\{a_{j}+\Lambda ^{\ast
}:j=1,\dots ,k\}$ is a Riesz spectrum for $\Omega ,$ where $\Lambda ^{\ast }$
is the dual lattice of $\Lambda $. In \cite{AAC15}, this theorem was
generalized to locally compact abelian groups and a converse theorem was
established.

\begin{theorem}
\cite{AAC15} Let $\Lambda $ be a full lattice in $\mathbb{R}^{d}$ and $%
\Omega $ a bounded measurable subset of $\mathbb{R}^{d}$. There exist $%
a_{1},\dots ,a_{k}$ $\in \mathbb{R}^{d}$ such that $\{e_{a_{j}-\gamma
}:j=1,\dots ,k,\gamma \in \Lambda ^{\ast }\}$ is a Riesz basis for $%
L^{2}(\Omega )$ if and only $\Omega $ $k$-tiles $\mathbb{R}^{d}$ with
respect to $\Lambda $.
\end{theorem}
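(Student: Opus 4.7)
The plan is to fiberize $L^2(\Omega)$ over a fundamental domain of $\Lambda$, turning the Riesz basis question into a uniform invertibility condition for a finite matrix of exponentials. Fix a measurable bounded fundamental domain $D$ for $\Lambda$ and, for each $\omega \in D$, set $N(\omega) := \{\lambda\in\Lambda : \omega + \lambda \in \Omega\}$. The map $U:L^2(\Omega)\to L^2(D;\ell^2(\Lambda))$ defined by $(Uf)(\omega)(\lambda) = f(\omega+\lambda)\chi_\Omega(\omega+\lambda)$ is an isometry onto the subspace of sections supported fiberwise on $N(\omega)$; boundedness of $\Omega$ enters here, as $N(\omega)\subset \Lambda\cap(\Omega - D)$ lies in a fixed finite set. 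Since $e^{-2\pi i\gamma\cdot\lambda}=1$ for $\gamma\in\Lambda^*$ and $\lambda\in\Lambda$, one computes $U(e_{a_j-\gamma})(\omega) = e^{2\pi i(a_j-\gamma)\cdot\omega}\,(e^{2\pi i a_j\cdot\lambda})_{\lambda\in N(\omega)}$. Expanding in the orthogonal system $\{e^{-2\pi i\gamma\cdot\omega}\}_{\gamma\in\Lambda^*}$ on $D$ via Plancherel then translates the Riesz basis property for $\{e_{a_j-\gamma}\}$ into uniform upper and lower bounds on the singular values of the matrix $M(\omega) := (e^{2\pi i a_j\cdot\lambda})_{j\leq k,\,\lambda\in N(\omega)}$ for a.e.\ $\omega\in D$.

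The necessity direction then falls out immediately: uniform invertibility forces $M(\omega)$ to be square, i.e.\ $|N(\omega)|=k$ a.e., which is exactly the multi-tile condition $\sum_{\lambda\in\Lambda}\chi_\Omega(\omega-\lambda)=k$. For sufficiency, assume $\Omega$ is a $k$-tile. By Kolountzakis's structure theorem, $\Omega$ decomposes (up to measure zero) into $k$ measurable sets of representatives of $\mathbb{R}^d/\Lambda$, yielding a measurable enumeration $\lambda_1(\omega),\dots,\lambda_k(\omega)$ of $N(\omega)$. One must then choose $a_1,\dots,a_k\in\mathbb{R}^d$ so that $|\det(e^{2\pi i a_j\cdot\lambda_\ell(\omega)})_{j,\ell}|$ is uniformly bounded away from zero (the upper bound being automatic, since all entries have modulus $1$).

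This last step is the main obstacle, and it is exactly where boundedness of $\Omega$ is essential: only finitely many unordered tuples $\{\lambda_1(\omega),\dots,\lambda_k(\omega)\}$ occur, so the problem reduces to choosing $(a_1,\dots,a_k)$ outside a finite union of proper algebraic subvarieties of $(\mathbb{R}^d)^k$ (one per tuple on which the relevant determinant vanishes). Such a choice exists because each subvariety is proper, and once the choice is made the uniform lower bound on $|\det M(\omega)|$ follows from the finiteness of the list of tuples. When this finiteness collapses --- the unbounded case --- the strategy breaks down, and this is precisely the gap the present paper fills via the Bohr compactification.
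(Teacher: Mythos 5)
Your argument is correct, and it is essentially the proof behind the result as the paper uses it: the theorem is quoted from \cite{AAC15} without proof, but the fiberization you carry out is precisely the content of Proposition \ref{equiv-1}, which reduces the Riesz basis property of $\{e_{a_j-\gamma}\}$ to uniform two-sided bounds on the fiber matrices $E^a_\omega$; necessity of the $k$-tile condition and sufficiency via the finiteness of the occurring tuples $N(\omega)$ for bounded $\Omega$ then go exactly as you say, following \cite{GL14,Kol15,AAC15}.

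One imprecision to repair in the sufficiency step: the sets $\{a\in(\mathbb{R}^d)^k:\det(e^{2\pi i a_j\cdot\lambda_\ell})_{j,\ell}=0\}$ are not algebraic subvarieties, since the entries are exponentials rather than polynomials in $a$. What you actually need is that each such determinant is not identically zero as a function of $a$; this holds because the distinct elements $\lambda_1,\dots,\lambda_k$ of a given tuple define linearly independent characters $a\mapsto e^{2\pi i a\cdot\lambda_\ell}$ of $\mathbb{R}^d$. Each vanishing set is then the zero set of a nontrivial real-analytic function, hence closed, nowhere dense and of Lebesgue measure zero, and your Baire (or measure) argument over the finite union goes through unchanged. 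Alternatively, one can sidestep genericity entirely by specializing $a=(x,2x,\dots,kx)$ as in Corollary \ref{suff} and Example \ref{admissible}: the determinant becomes a Vandermonde determinant in the values $e^{2\pi i x\cdot\lambda_\ell}$, which is nonzero (uniformly over the finitely many tuples) as soon as $x$ separates the finitely many differences $\lambda_i-\lambda_j$ modulo the dual lattice. Your closing observation about where boundedness enters, and why the finiteness collapse in the unbounded case forces the Bohr-compactification approach of this paper, is exactly right.
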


This motivated the following definition.

\begin{definition}
A $k$-tile for a lattice $\Lambda $ is said to have a \emph{structured Riesz
basis of exponentials} if there exist $a_{1},\dots ,a_{k}\in \mathbb{R}^{d}$
such that the collection $\{e_{a_{j}-\gamma }:j=1,\dots ,k,\gamma \in
\Lambda ^{\ast }\}$ is a Riesz basis for $L^{2}(\Omega ).$
\end{definition}

An example found in \cite{AAC15} shows that an unbounded multi-tile need not
support structured Riesz bases. The question of the existence of unbounded
multi-tiles with structured bases of exponentials was recently addressed in 
\cite{CC18} where it was proven that admissible multi-tiles always have a
structured Riesz basis. The class of admissible multi-tiles (see Example \ref%
{admissible} for the definition) contains all the bounded multi-tiles, as
well as a large class of unbounded multi-tiles. However, admissibility is
only a sufficient condition, as seen in \cite{CC18}, and thus it is an open
question as to exactly which multi-tiles support a structured Riesz basis of
exponentials.

In this paper we answer this question, giving a necessary and sufficient
condition for a multi-tile in $\mathbb{R}^{d}$ to have a structured Riesz
basis of exponentials. This characterization is described in terms of the
Bohr compactification of the lattice involved. We provide new examples of
non-admissible multi-tiles with structured Riesz bases using this
characterization. Moreover, our theorem extends in a natural way to
characterize $k$-tiles that have a structured Riesz basis in second
countable, locally compact abelian groups.


\section{Bohr compactification and Bohr topology}

In this subsection we will review the basic facts about the Bohr
compactification and Bohr topology that will be needed later in the paper.
For a more thorough treatment we refer to \cite{HR63} or \cite{Rud62}.

Let $G$ be a locally compact (LCA) group and $\widehat{G}=$ $\Gamma $ its
Pontryagin dual, the set of continuous characters on $G$. One example of
such a pair is $G=$ $\mathbb{T}=[0,1)$, a compact group under addition mod $%
1,$ and its dual $\Gamma =\mathbb{Z}$. Here the action of $n\in \mathbb{Z}$
on $\mathbb{T}$ is given by $n(x):=\exp (2\pi inx)$.

The group $\Gamma $ is given the compact-open topology, meaning that the
basic neighbourhoods of $\gamma _{0}\in \Gamma $ can be taken to be the sets
of the form 
\begin{equation*}
N(\gamma _{0},K,\varepsilon )=\{\gamma \in \Gamma :\left\vert \gamma
(x)-\gamma _{0}(x)\right\vert <\varepsilon \text{ for all }x\in K\}
\end{equation*}%
for $K$ any compact subset of $G$ and $\varepsilon >0$. The group $G$ is
compact if and only if $\Gamma $ is discrete.

This topology makes $\Gamma $ a locally compact abelian group. Hence it,
too, has a dual group. Every element $x\in G$ can be viewed as a continuous
character on $\Gamma $ by the rule $x(\gamma ):=\gamma (x)$. Thus $G$ embeds
naturally into $\widehat{\Gamma }$. The Pontryagin duality theorem says that
this embedding is an onto homeomorphism and hence $G=\widehat{\Gamma }$.

Denote by $G_{\dsc}$ the group $G$ with the discrete topology. Its dual, $%
\widehat{G_{\dsc}},$ is the compact group denoted $\overline{\Gamma },$ known
as the Bohr compactification of $\Gamma $. The compact-open topology on $%
\overline{\Gamma }$ is known as the Bohr topology. As every function on a
discrete topological space is continuous, the elements of $\overline{\Gamma }
$ are all the (algebraic) characters of the group $G$, not just the
continuous ones with respect to the original topology on $G$. Thus $\Gamma $
canonically embeds in $\overline{\Gamma }$. Moreover, $\Gamma $ is dense in $%
\overline{\Gamma }$ (with the respect to the Bohr topology).

Since the only compact sets in a discrete space are finite, the basic
neighbourhoods of $\chi _{0}\in $ $\overline{\Gamma }$ are the sets 
\begin{equation*}
\{\chi \in \overline{\Gamma }:\left\vert \chi _{0}(t_{j})-\chi
(t_{j})\right\vert <\varepsilon \text{ for }j=1,...,k\},
\end{equation*}
where $t_{1},...,t_{k}\in G$. Hence although the topology is not metrizable
(whenever $\Gamma $ is infinite), convergence of nets is easy to describe: a
net $\{\chi _{\alpha }\}_{\alpha }\subseteq $ $\overline{\Gamma }$ converges
to $\chi _{0}$ in the Bohr topology if and only if the net of complex
numbers $\{\chi _{\alpha }(t)\}_{\alpha }\rightarrow \chi _{0}(t)$ for all $%
t\in G$.

In this paper, we will be particularly interested in the Bohr
compactification of $\mathbb{Z}$, $\overline{\mathbb{Z}}=\widehat{\mathbb{T}%
_{\dsc}}$ or, more generally, the Bohr compactification of $\mathbb{Z}^{d}$.
We have $\chi _{\alpha }\rightarrow \chi _{0}\in \overline{\mathbb{Z}}$
precisely when $\chi _{\alpha }(z)\rightarrow $ $\chi _{0}(z)$ for every $%
z\in \mathbb{T}$. A similar statement holds for $\overline{\mathbb{Z}^{d}}=%
\overline{\mathbb{Z}}^{d}$.

\section{\protect\bigskip Characterizing Structured Riesz Bases on
Multi-tiles}


Our results of this section hold in the context of a general LCA group $G$
and a uniform lattice $\Lambda ,$ but for clarity of the exposition we will
first consider the case $G=\mathbb{R}^{d}$, $\Lambda =\mathbb{Z}^{d}$ and
later devote a section to explaining how to extend the results to the more
general setting. We recall that when $\Lambda = \Z$, we have that $\Lambda ^{\ast },$ the dual lattice to $%
\Lambda ,$ is again $\Lambda $.

Let $P = \lbrack 0,1)^{d}\simeq $ $\mathbb{R}^{d}/\mathbb{Z}^{d}$. This
is a group under addition mod $1$, the dual group to $\mathbb{Z}^{d},$ as
well as being a fundamental domain for the lattice $\Lambda $. Since $
\Lambda =\widehat{P}$ $=\widehat{(\mathbb{R}^{d}/\mathbb{Z}^{d})}$
, any $\eta =(n_{1},...,n_{d})\in \Lambda $ can be viewed as a continuous character
on $P$ or a continuous character on $\mathbb{R}^{d}$ that annihilates $%
\mathbb{Z}^{d}$. For $a=(a_{1},...,a_{d})\in \mathbb{R}^{d}$ we will write 
\begin{equation*}
e_{\eta }(a)=e_{a}(\eta ):=\exp (2\pi i\eta \cdot a).
\end{equation*}%
Thus $\eta $ acts on $a$ by $\eta (a)=e_{\eta }(a)$. Likewise, the elements
of $\overline{\Lambda }=\overline{\mathbb{Z}^{d}}$ can be viewed as all the
characters on $\mathbb{R}^{d}$ that annihilate $\mathbb{Z}^{d}$.

Let $\Omega \subset \mathbb{R}^{d}$ be a $k$-tile in $\mathbb{R}^{d}$. For
 every $\omega \in P$, define the set
\begin{equation*}
\Lambda _{\omega }:=\{\,\lambda \in \Lambda \,:\,\omega +\lambda \in \Omega
\,\}.
\end{equation*}%
Since $\Omega $ is a $k$-tile, the cardinality of $\Lambda _{\omega }$
equals $k$ for (Lebesgue) almost every $\omega \in P,$ say for all $\omega
\in P^{\prime }$ , a subset of $P$ of full measure. 
Sometimes we will need to consider  the set $\Lambda_{\w}$ as an element of $\Lambda^k$.
Thus we will   write $\overrightarrow{\Lambda
}_{\omega }=(\lambda _{1}(\omega ),\dots ,\lambda _{k}(\omega ))$ for all $%
\omega \in P^{\prime },$ where $\lambda _{1}(\omega )\prec \dots \prec
\lambda _{k}(\omega )$ under the lexicographic ordering.  We will keep the notation
$\Lambda _{\omega }$, when we think  of it as a set.

We use the notation $m(S)$ for the Lebesgue measure of $S\subseteq \mathbb{R}%
^{d}$. For $t\in \Lambda ^{k},$ put 
\begin{equation*}
P_{t}=\{\omega \in P^{\prime }:\overrightarrow\Lambda_{\omega }=t\}\text{, }Q=\bigcup
\{P_{t}:\text{ }m(P_{t})>0\}
\end{equation*}%
and define 
\begin{equation}
D=\{\overrightarrow\Lambda_{\omega }:\omega \in Q\text{ }\}\subseteq \Lambda ^{k}.
\label{D}
\end{equation}%
We remark that $\Omega =\{\omega +\Lambda_{\omega }:\omega \in P\}$ and
that the set $\Omega \backslash \{\omega +\Lambda_{\omega }:\omega \in Q\}$
has Lebesgue measure zero.

The product group, $\Lambda ^{k},$ is a LCA group with dual group $(\mathbb{T%
}^{d})^{k}$. Its Bohr compactification, $\overline{\Lambda ^{k}}$, is equal
    to $\overline{(\mathbb{Z}^{d})^k}=\left( \overline{\mathbb{Z}^{d}}\right) ^{k}$.

\begin{notation}
For $\chi =(\chi _{1},...,\chi _{k})\in \overline{\Lambda ^{k}}$ and $%
a=(a_{1},\dots ,a_{k})$ with $a_j \in\mathbb{R}^{d}$, let $E_{\chi }^{a}$ be
the $k\times k$ matrix whose $(r,s)$ entry is given by $\left( E_{\chi
}^{a}\right) _{rs}=\chi _{r}(a_{s})$ for $1\leq r,s\leq k$. In the case that 
$\chi =\overrightarrow\Lambda_{\omega }$ for some $\omega \in P$, we simply write $%
E_{\omega }^{a}$. Thus if $\overrightarrow\Lambda_{\omega }=(\lambda _{1},...,\lambda
_{k}),$ then 
\begin{equation*}
E_{\omega }^{a}=%
\begin{pmatrix}
e_{a_{1}}\,(\lambda _{1}) & \ldots & e_{a_{k}}\,(\lambda _{1}) \\ 
\vdots & \ddots & \vdots \\ 
e_{a_{1}}\,(\lambda _{k}) & \ldots & e_{a_{k}}\,(\lambda _{k})%
\end{pmatrix}.%
\end{equation*}
\end{notation}

The following Proposition, proved in \cite{AAC15} (see also \cite{CC18}),
will be very important for what follows.

\begin{proposition}
\label{equiv-1} \label{cond-1} The following statements are equivalent for $%
a\in (\mathbb{R}^{d})^{k}$:

\begin{enumerate}
\item There exist $0 < A < B\ $ such that for a.e. $\omega \in P$ 
\begin{equation*}
A\|x\|^2 \leq \|E^a_\omega x\|^2 \leq B\|x\|^2 \ \text{for every\ } x \in 
\mathbb{C}^k.
\end{equation*}

\item The set $\{e_{a_{j}-\gamma }:j=1,\dots ,k,\,\gamma \in \Lambda ^{\ast
}\}$ is a structured Riesz basis of exponentials for $L^{2}(\Omega )$.
\end{enumerate}
\end{proposition}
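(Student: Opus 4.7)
The plan is to reduce the proposition to a direct-integral statement via two natural unitary identifications. The $k$-tile property makes the restriction-and-vectorization map
\begin{equation*}
U \colon L^2(\Omega) \to L^2(P; \mathbb{C}^k), \qquad (Uf)_r(\omega) = f(\omega + \lambda_r(\omega))
\end{equation*}
a bijective isometry (where $\overrightarrow{\Lambda}_\omega = (\lambda_1(\omega),\dots,\lambda_k(\omega))$ for a.e.\ $\omega \in P$), and Plancherel on the torus gives a bijective isometry
\begin{equation*}
V \colon \ell^2(\{1,\dots,k\} \times \Lambda^*) \to L^2(P; \mathbb{C}^k), \qquad h_j(\omega) := e_{a_j}(\omega) \sum_{\gamma \in \Lambda^*} c_{j,\gamma}\, e^{-2\pi i \gamma \cdot \omega},
\end{equation*}
the twist by the unimodular factor $e_{a_j}(\omega)$ leaving the norm unchanged.

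A short direct computation, using that $\gamma \cdot \lambda_r(\omega) \in \mathbb{Z}$ because $\gamma \in \Lambda^*$ and $\lambda_r(\omega) \in \Lambda$, gives
\begin{equation*}
e_{a_j - \gamma}(\omega + \lambda_r(\omega)) = e_{a_j}(\omega)\, e_{a_j}(\lambda_r(\omega))\, e^{-2\pi i \gamma \cdot \omega}.
\end{equation*}
Summing over $j$ and $\gamma$ shows that under the identifications $U$ and $V$, the synthesis map $c \mapsto \sum_{j,\gamma} c_{j,\gamma} e_{a_j - \gamma}$ becomes precisely the pointwise matrix multiplication operator $M \colon h(\omega) \mapsto E^a_\omega h(\omega)$ on $L^2(P; \mathbb{C}^k)$. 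Condition (2) is therefore equivalent to $M$ being a bounded, invertible operator.

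From condition (1) it is immediate that $\|M\| \leq \sqrt{B}$, and since each $E^a_\omega$ is a square matrix with $\|E^a_\omega x\| \geq \sqrt{A}\,\|x\|$, it is invertible a.e.\ with $\|(E^a_\omega)^{-1}\| \leq 1/\sqrt{A}$, so $M$ is invertible with $\|M^{-1}\| \leq 1/\sqrt{A}$; this yields (2). Conversely, assuming (2), the Riesz inequalities together with the identity $\|\sum c_{j,\gamma} e_{a_j - \gamma}\|_{L^2(\Omega)}^2 = \int_P \|E^a_\omega h(\omega)\|^2\, d\omega$ and the density of trigonometric polynomials in $L^2(P; \mathbb{C}^k)$ extend to
\begin{equation*}
A \int_P \|h(\omega)\|^2 d\omega \,\leq\, \int_P \|E^a_\omega h(\omega)\|^2 d\omega \,\leq\, B \int_P \|h(\omega)\|^2 d\omega
\end{equation*}
for every $h \in L^2(P; \mathbb{C}^k)$. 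To extract the pointwise bound in (1), I would fix a countable dense set $\{v_n\} \subset \mathbb{C}^k$, test this global inequality against $h(\omega) = v_n \chi_E(\omega)$ for arbitrary Borel $E \subset P$, apply Lebesgue differentiation to obtain $A \|v_n\|^2 \leq \|E^a_\omega v_n\|^2 \leq B \|v_n\|^2$ for a.e.\ $\omega$, and then use continuity of $v \mapsto \|E^a_\omega v\|$ to conclude that the bound holds simultaneously for all $v \in \mathbb{C}^k$ off a single null set.

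The main obstacle is precisely this last step: transferring a global $L^2$-type inequality to a pointwise operator-norm inequality requires a measurable/density argument, whereas the remainder of the argument is essentially the careful bookkeeping of the two isometries and the key algebraic identity $e^{-2\pi i \gamma \cdot \lambda_r(\omega)} = 1$.
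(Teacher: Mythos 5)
Your argument is correct and is essentially the fiberization proof used in \cite{AAC15}, which the paper cites for this proposition rather than proving it itself: the unitaries $U$ and $V$ conjugate the synthesis operator to pointwise multiplication by $E^a_\omega$ on $L^2(P;\mathbb{C}^k)$, and the equivalence follows. The step you flag as the main obstacle (passing from the integrated inequality to the pointwise one by testing against $h=v\chi_E$, invoking a countable dense set of vectors, and using continuity of $v\mapsto\|E^a_\omega v\|$) is a standard and complete measure-theoretic argument, not a gap.
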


Let $\overline{D}$ denote the closure of $D$ in the Bohr topology of $%
\overline{\Lambda }^{k}$. We now state and prove one of the main theorems of
the paper.

\begin{theorem}
\label{equiv-2} \label{cond-2} Let $a=(a_{1},\dots ,a_{k})$ with $a_{j} \in 
\mathbb{R}^{d}$. The following statements are equivalent:

\begin{enumerate}
\item There exist $0<A<B\ $ such that for a.e. $\omega \in P,$ \label{i} 
\begin{equation}
A\Vert x\Vert ^{2}\leq \Vert E_{\omega }^{a}x\Vert ^{2}\leq B\Vert x\Vert
^{2}\ \text{for every\ }x\in \mathbb{C}^{k}.  \label{R}
\end{equation}

\item If $\chi \in \overline{D}\subseteq \overline{\Lambda }^{k}$ then $\det
E_{\chi }^{a}\neq 0$. \label{ii}
\end{enumerate}
\end{theorem}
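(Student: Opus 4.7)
The plan is to leverage Proposition~\ref{equiv-1}, which already reduces the Riesz basis condition to the uniform singular value bound \eqref{R}. Thus it suffices to prove that \eqref{R} holding a.e.\ on $P$ is equivalent to $\det E_\chi^a\neq 0$ on the Bohr closure $\overline{D}$. The whole argument hinges on two structural facts about $\overline{\Lambda}^k$: first, $\overline{D}$ is compact (as a closed subset of the compact group $\overline{\Lambda^k}=(\overline{\Lambda})^k$); second, $\chi\mapsto E_\chi^a$ is Bohr--continuous, since by the description of Bohr convergence recalled in Section~2, a net $\chi_\alpha\to\chi$ in $\overline{\Lambda}^k$ means $\chi_{\alpha,r}(t)\to\chi_r(t)$ for every $t\in\mathbb{R}^d$ and $r=1,\dots,k$, and in particular for $t=a_s$ each entry $(E^a_{\chi_\alpha})_{rs}=\chi_{\alpha,r}(a_s)$ converges to $(E^a_\chi)_{rs}$. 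Therefore $\chi\mapsto\det E^a_\chi$ is also Bohr--continuous, and trivially $\|E^a_\chi\|\leq k$ uniformly in $\chi$, because every character has modulus one.

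For the implication \eqref{i}$\Rightarrow$\eqref{ii} I would argue as follows. The lower singular value bound in \eqref{R} forces $|\det E^a_\omega|\geq A^{k/2}$ for a.e.\ $\omega\in P$. Given $t\in D$, by definition there is a set $P_t$ of positive measure with $\overrightarrow\Lambda_\omega=t$ for all $\omega\in P_t$, and on $P_t$ we have $E^a_\omega=E^a_t$; hence $|\det E^a_t|\geq A^{k/2}$ for every $t\in D$. Continuity of $\chi\mapsto|\det E^a_\chi|$ and the density of $D$ in $\overline{D}$ immediately extend the inequality to all of $\overline{D}$, giving \eqref{ii}.

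For \eqref{ii}$\Rightarrow$\eqref{i}, the continuous function $\chi\mapsto|\det E^a_\chi|$ is strictly positive on the compact set $\overline{D}$, so it attains a minimum $\delta>0$. For a.e.\ $\omega\in P$ (namely $\omega\in Q$) we have $\overrightarrow\Lambda_\omega\in D\subseteq\overline{D}$, so $|\det E^a_\omega|\geq\delta$. Combined with the uniform upper bound $\|E^a_\omega\|\leq k$, we convert determinant control into singular value control: writing $|\det E^a_\omega|=\prod_{i=1}^k\sigma_i(E^a_\omega)$ and bounding the $k-1$ largest factors by $k$, one obtains $\sigma_{\min}(E^a_\omega)\geq\delta/k^{k-1}$, which delivers \eqref{R} with $A=\delta^2/k^{2(k-1)}$ and $B=k^2$.

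The only genuinely delicate point is the passage from a lower bound on $|\det E^a_\chi|$ to a lower bound on the smallest singular value of $E^a_\omega$; this is standard once one has the a priori upper bound $\|E^a_\omega\|\leq k$, which itself is immediate because the entries of $E^a_\omega$ are values of characters. The rest is bookkeeping: checking that the identification $\overline{\Lambda^k}=(\overline{\Lambda})^k$ allows us to view $D\subset\Lambda^k$ inside its Bohr compactification, and that the almost-everywhere set on which \eqref{R} must be established matches the set $Q$ used to define $D$.
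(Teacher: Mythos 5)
Your proof is correct and follows essentially the same route as the paper: both arguments rest on the compactness of $\overline{D}$, the Bohr-continuity of $\chi\mapsto\det E_{\chi}^{a}$, and the conversion between a lower bound on $|\det E_{\omega}^{a}|$ and a lower bound on the smallest singular value via the a priori uniform upper bound on $\Vert E_{\omega}^{a}\Vert$. The only difference is organizational — you argue both implications directly (extreme value theorem on $\overline{D}$, and extension of the determinant bound from $D$ to $\overline{D}$ by density), whereas the paper phrases the same ideas as net-based contradiction arguments.
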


\begin{remark}
It is only the left hand inequality of (\ref{R}) that is of interest.
Indeed, we always have $\Vert E_{\omega }^{a}x\Vert ^{2}\leq \left\Vert
E_{\omega }^{a}\right\Vert ^{2}\left\Vert x\right\Vert ^{2}$ and the numbers 
$\left\Vert E_{\omega }^{a}\right\Vert ^{2}$ are uniformly bounded by some
constant $B$ depending only on $k$ since the entries of the matrices $%
E_{\omega }^{a}$ are all of modulus one.
\end{remark}

\begin{proof}
Assume first that (\ref{ii}) holds. We claim that this implies there exists $%
\varepsilon >0$ such that 
\begin{equation*}
\left\vert \det \left( E_{\chi }^{a}\right) \right\vert \geq \varepsilon
\quad \forall \chi \in \overline{D}.
\end{equation*}

Suppose not. Then for each $\varepsilon >0$ there exists $\chi _{\varepsilon
}\in \overline{D}$ such that $|\det \left( E_{\chi _{\varepsilon
}}^{a}\right) |<\varepsilon $ and consider the net $\{\chi _{\varepsilon
}\}_{\varepsilon }$. Using the compactness of $\overline{D},$ obtain a
subnet (not renamed) and a character $\chi \in \overline{D}$ such that $%
\{\chi _{\varepsilon }\}$ converges to $\chi $ in the Bohr topology. That
means $\chi _{\varepsilon }(a)\rightarrow \chi (a)$ for all $a\in (\mathbb{R}%
^{d})^{k}$. Consequently, if $\chi _{\varepsilon }=(\chi _{1,\varepsilon
},...,\chi _{k,\varepsilon })$ and $\chi =(\chi _{1},...,\chi _{k}),$ we
have $\chi _{j,\varepsilon }(b)\rightarrow \chi _{j}(b)$ for all $b\in 
\mathbb{R}^{d}$ and $j=1,...,k$. Since the determinant of a matrix is a
continuous function of its entries, it follows that $\det \left( E_{\chi
_{\varepsilon }}^{a}\right) \rightarrow \det (E_{\chi }^{a})$ and hence $%
\det (E_{\chi }^{a})=0$. As $\chi \in \overline{D},$ that contradicts (\ref%
{ii}), which proves the claim.

So we can assume there exists $\varepsilon >0$ such that $|\det \left(
E_{\chi }^{a}\right) |\geq \varepsilon \,\,\,$for all$\,\chi \in \overline{D}
$ and we will use this to prove that (\ref{R}) holds. Towards this, note
that for $\chi =\overrightarrow\Lambda_{\omega }\in D$ we have 
\begin{equation}
\varepsilon ^{2}\leq |\det\left( E_{\omega }^{a}\right) |^2=\det\left(
\left( E_{\omega }^{a}\right) ^{\ast }E_{\omega }^{a}\right) =\rho
_{1}(\omega )\dots\rho _{k}(\omega ),  \label{cota}
\end{equation}%
where $\rho _{1}({\omega })\leq \rho _{2}(\omega )\leq \dots \leq \rho
_{k}(\omega )$ are the eigenvalues of $(E_{\omega }^{a})^{\ast }E_{\omega
}^{a}.$ Since $B\geq \Vert E_{\omega }^{a}\Vert ^{2}=\rho _{k}(\omega )$ $\,$%
for all $\omega $, it follows from \eqref{cota} that $\rho _{1}(\w)\geq
\varepsilon ^{2}/B^{k-1}$ for a.e. $\w$. As $\Vert (E_{\omega }^{a})^{-1}\Vert ^{2}=1/\rho
_{1}(\w)$, (\ref{R}) holds with $A=\varepsilon ^{2}/B^{k-1}$ and this choice of $%
B$.

For the other implication, assume that there is some $\chi _{0}\in \overline{%
D}$ with $\det\left (E_{\chi _{0}}^{a}\right )=0$ and that (\ref{i}) holds
for some choice of $A>0$. Choose a net $\{t_{\alpha }\}\in D$ which
converges to $\chi _{0}$ in the Bohr topology. Then $\det \left(
E_{t_{\alpha }}^{a}\right) \rightarrow \det \left( E_{\chi _{0}}^{a}\right)
=0$ .

We have $|\det E_{t_{\alpha }}^{a}|^{2}=\rho _{1}(\alpha )\dots\rho
_{k}(\alpha )\geq \rho _{1}^{k}(\alpha )$ where $\rho _{j}({\alpha })$ are
the (ordered) eigenvalues of $(E_{t_{\alpha }}^{a})^{\ast }E_{t_{\alpha
}}^{a}$. Since $\det \left( E_{t_{\alpha }}^{a}\right) \rightarrow 0$, this
certainly forces the net $\{\rho _{1}(\alpha )\}_{\alpha }$ to converge to
zero. If we pick an $\alpha $ such that $\rho _{1}(\alpha )<A$ and choose $%
x\in \mathbb{C}^{k}$ an associated eigenvector of $(E_{t_{\alpha
}}^{a})^{\ast }E_{t_{\alpha }}^{a}$, then we have 
\begin{equation*}
\Vert E_{t_{\alpha }}^{a}x\Vert ^{2}=\rho _{1}(\alpha )\Vert x\Vert
^{2}<A\Vert x\Vert ^{2}.
\end{equation*}%
Furthermore, if we choose any $\omega $ with $\overrightarrow\Lambda_{\omega }=t_{\alpha },
$ then $\Vert E_{t_{\alpha }}^{a}x\Vert ^{2}=\Vert E_{\omega }^{a}x\Vert ^{2}
$. Consequently, the inequality $A\Vert x\Vert ^{2}\leq \Vert E_{\omega
}^{a}x\Vert ^{2}$ fails to hold for all $\omega \in P_{t_{\alpha }},$ which
is a set of positive measure since $t_{\alpha }\in D$. This gives a
contradiction and completes the proof.
\end{proof}

\begin{remark}
As essentially demonstrated in the proof, a continuity argument shows that $%
S_{a}=\{\chi \in \overline{\Lambda }^{k}:$ $\det E_{\chi }^{a}=0\}$ is a
closed set in the compact space $\overline{\Lambda }^{k}$. Thus condition (%
\ref{ii}) is simply the statement that the two compact sets, $\overline{D}$
and $S_{a},$ are disjoint.
\end{remark}

Combining Proposition \ref{cond-1} and Theorem \ref{cond-2} gives our main
result:

\begin{corollary}
\label{cond-3} With the notation above we have: A necessary and sufficient
condition for a $k$-tile in $\mathbb{R}^{d}$ to have a structured Riesz
basis of exponentials is that there exists $a=(a_{1},\dots ,a_{k})\in (%
\mathbb{R}^{d})^{k}$ such that $\det E_{\chi }^{a}\neq 0$ for any $\chi \in 
\overline{D}$.
\end{corollary}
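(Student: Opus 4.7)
The plan is to derive this corollary as an essentially immediate consequence of the two preceding results, since the statement is nothing more than the concatenation of the equivalences established in Proposition~\ref{cond-1} and Theorem~\ref{cond-2}. By the definition of a structured Riesz basis of exponentials, the $k$-tile $\Omega$ admits one precisely when there exists some $a=(a_1,\dots,a_k)\in(\mathbb{R}^d)^k$ such that $\{e_{a_j-\gamma}:j=1,\dots,k,\,\gamma\in\Lambda^*\}$ is a Riesz basis for $L^2(\Omega)$.

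First I would apply Proposition~\ref{cond-1} to such a candidate $a$: the Riesz basis property for that particular system is equivalent to the two-sided matrix bound $A\Vert x\Vert^2\leq \Vert E^a_\omega x\Vert^2\leq B\Vert x\Vert^2$ holding for a.e. $\omega\in P$ and every $x\in\mathbb{C}^k$, with uniform constants $0<A<B$. That is, the existence of a structured Riesz basis with parameters $a$ is equivalent to condition (\ref{i}) of Theorem~\ref{cond-2} being satisfied by the same $a$.

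Next I would invoke Theorem~\ref{cond-2}, which states that condition (\ref{i}) for a given $a$ is equivalent to condition (\ref{ii}) for that same $a$, namely $\det E^a_\chi\neq 0$ for every $\chi\in\overline{D}$. Quantifying existentially over $a\in(\mathbb{R}^d)^k$ on both sides of the chain of equivalences produces exactly the statement of the corollary.

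There is no genuine obstacle here; the substantive analytic and topological content was already absorbed into the proof of Theorem~\ref{cond-2}, where the compactness of $\overline{D}$ in the Bohr topology together with the continuity of the determinant were used to upgrade pointwise non-vanishing on $\overline{D}$ to a uniform lower bound on $|\det E^a_\chi|$, and conversely to produce a set $P_{t_\alpha}$ of positive measure on which the lower frame bound fails. The only care required in the combination is to keep the existential quantifier on $a$ attached to the same $a$ through both equivalences, which is transparent.
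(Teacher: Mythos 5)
Your proposal is correct and matches the paper exactly: the paper derives this corollary simply by combining Proposition~\ref{cond-1} and Theorem~\ref{cond-2}, chaining the two equivalences for a fixed $a$ and then quantifying existentially over $a$. Your additional care about keeping the existential quantifier attached to the same $a$ throughout is the only (minor) point worth making, and you handle it correctly.
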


\section{Extension to LCA groups}

In this section we will show how to extend our results to the general
setting of LCA groups and will use our characterization to give examples of $%
k$-tiles with structured Riesz bases.

\subsection{Characterization of structured Riesz bases in the general setting%
}

We will mainly follow the notation of \cite{AAC15}. Let $G$ be a second
countable LCA group and $\Gamma $ its Pontryagin dual. As $G$ is secound
countable, so is $\Gamma $, \cite{DE}. Consider $H\subset G$ a uniform
lattice, meaning a discrete subgroup of $G$ such that $G/H$ is compact.

We will denote by $\Lambda \subset \Gamma $ the dual lattice of $H$, also
known as the annihilator of $H$ and defined by 
\begin{equation*}
\Lambda =\{\lambda \in \Gamma :\lambda (h)=1\text{ for all }h\in H\}.
\end{equation*}%
Then $\Lambda \subseteq \Gamma $ is the dual of the compact group $G/H$ and
hence is discrete. Being a discrete subgroup of the second countable group $%
\Gamma ,$ $\Lambda $ is countable. The group $\Gamma /\Lambda $ is compact
being the dual of the discrete group $H,$ so $\Lambda $ is also a uniform
lattice. By $\overline{\Lambda }$ we mean the Bohr compactification of $%
\Lambda $. As $\Lambda =\widehat{G/H}$, $\overline{\Lambda }$ can be viewed
as the subgroup of $\overline{\Gamma }$ consisting of all the characters on $%
G$ that annihilate $H$.

A Borel section of $\Gamma /\Lambda $ is a set of representatives of this
quotient that is a Borel set. It can be proved that there always exists a
relatively compact Borel section $P\subseteq \Gamma $, which we will call a 
\emph{fundamental domain}; see \cite{FG68,KK98}. As $\Lambda $ is
countable and the Haar measure of $\Gamma ,$ denoted $m,$ is Borel regular,
we always have $0<m(P)<\infty $.

In the previous section, we studied the special case of $G=\mathbb{R}%
^{d}=\Gamma $, $H=\mathbb{Z}^{d}=\Lambda $ and $P=[0,1)^{d}$.

As in the Euclidean case, we say that a measurable set $\Omega \subset
\Gamma $ is a \emph{$k$-tile for $\Lambda ,$} or a \emph{multi-tile of order 
$k$ with respect to $\Lambda $}, if 
\begin{equation}
\sum_{\lambda \in \Lambda }\chi _{\Omega }(\omega -\lambda )=k\text{ for
a.e. }\omega .  \label{k-tile}
\end{equation}%
A $k$-tile for a lattice $\Lambda $ is a union of $k$ fundamental domains up
to $\Gamma $-Haar measure zero, \cite{AAC15,Kol15}. In the classical case of $G=\mathbb{R}^{d}$, 
$\Lambda $ is called a \emph{full lattice} if $\Lambda =M(\mathbb{Z}^{d})$
for an invertible matrix $M$. In this case, the Lebesgue measure of $\Omega $
is $k\left\vert \det M\right\vert $. In the general case the $\Gamma $-Haar
measure of $\Omega $ is $m(\Omega )=k\,m(P).$ 

Identifying the group $G$ with its double dual, $\widehat{\Gamma },$ we will
denote by $e_{g}$ the character acting on $\Gamma $ associated with the
element $g\in G$, i.e., 
\begin{equation*}
e_{g}(\gamma )=\gamma (g),\,\,\gamma \in \Gamma .
\end{equation*}%
We will say that a $k$-tile $\Omega \subset \Gamma $ for $\Lambda $ supports
a\textit{\ structured Riesz basis of exponentials} (characters) if there
exists an element $a=(a_{1},\dots ,a_{k})\in G^{k}$ such that $%
\{e_{a_{j}-h}:j=1,\dots ,k,\,h\in H\}$ is a Riesz basis of $L^{2}(\Omega ,m)$%
.

As in the Euclidean case, Proposition \ref{equiv-1} still holds in this more
general setting, see \cite{AAC15}, where the matrices $E_{\omega }^{a}$ (and 
$E_{\chi }^{a}$) are defined in the obvious manner. Because the lattice $%
\Lambda $ might not have a natural order, we simply choose any enumeration
of $\Lambda $ and use the corresponding lexicographical order on $\Lambda
^{k}$. In this way we can define the sets $Q\subseteq \Gamma $ and $%
D\subseteq \Lambda ^{k}$ as before. Theorem \ref{equiv-2} is again valid in
this setting and its proof follows the same arguments as in the Euclidean
case. Here is its precise statement where $\overline{D}$ is the closure of $%
D $ in $\overline{\Lambda ^{k}}$.

\begin{theorem}
\label{cond-groups} Let $G$ be a second countable, locally compact abelian
group with dual group $\Gamma $. A necessary and sufficient condition for a $%
k$-tile $\Omega $ in $\Gamma $ to have a structured Riesz basis of
exponentials is that there exists $a=(a_{1},\dots ,a_{k})\in G^{k}$ such
that $\det E_{\chi }^{a}\neq 0$ for any $\chi \in \overline{D}$.
\end{theorem}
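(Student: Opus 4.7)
My plan is to mirror the proof of Theorem~\ref{equiv-2} from the Euclidean case, replacing the specific ingredients ($\mathbb{R}^d$, $\mathbb{Z}^d$, Lebesgue measure, $P=[0,1)^d$) with their LCA analogues. First I would invoke the version of Proposition~\ref{equiv-1} valid in the LCA setting (stated in the paper and proved in \cite{AAC15}), which again reduces the existence of the structured Riesz basis $\{e_{a_j-h}:j=1,\dots,k,\ h\in H\}$ for $L^2(\Omega,m)$ to the existence of uniform constants $0<A<B$ satisfying
\[
A\|x\|^2 \leq \|E_\omega^a x\|^2 \leq B\|x\|^2 \qquad \text{for a.e.\ } \omega \in P \text{ and every } x\in\mathbb{C}^k.
\]
The whole task then collapses to establishing the LCA analogue of Theorem~\ref{equiv-2}: these bilateral bounds hold if and only if $\det E_\chi^a \neq 0$ for every $\chi \in \overline{D} \subseteq \overline{\Lambda^k} = \overline{\Lambda}^k$.

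For the direction \emph{determinant nonvanishing implies Riesz bounds}, I would use that the evaluation map $\chi \mapsto \chi(g)$ on $\overline{\Lambda}$ is continuous for each fixed $g\in G$ (this is the defining property of the Bohr topology), so each entry $\chi_r(a_s)$ of $E_\chi^a$, and hence $\det E_\chi^a$, depends continuously on $\chi\in\overline{\Lambda}^k$. Compactness of $\overline{D}$ combined with the hypothesis then yields a uniform lower bound $|\det E_\chi^a| \geq \varepsilon > 0$ on $\overline{D}$. Since every entry of $E_\omega^a$ is a character value and hence of modulus one, the operator norm is uniformly bounded by some $B=B(k)$. Writing the ordered eigenvalues of $(E_\omega^a)^\ast E_\omega^a$ as $\rho_1(\omega)\leq\dots\leq\rho_k(\omega)$, the product $\rho_1(\omega)\cdots\rho_k(\omega) = |\det E_\omega^a|^2 \geq \varepsilon^2$ holds for $\omega \in Q$, and since each $\rho_j(\omega)\leq B$, one concludes $\rho_1(\omega) \geq \varepsilon^2/B^{k-1}$, which is the desired lower Riesz constant. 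Conversely, if some $\chi_0 \in \overline{D}$ has $\det E_{\chi_0}^a = 0$, I would pick a net $\{t_\alpha\}\subseteq D$ with $t_\alpha \to \chi_0$ in the Bohr topology; continuity of the determinant forces $\det E_{t_\alpha}^a \to 0$, and since $|\det E_{t_\alpha}^a|^2 \geq \rho_1(\alpha)^k$, the smallest eigenvalue tends to $0$. Choosing $\alpha$ with $\rho_1(\alpha)<A$ and an associated eigenvector of $(E_{t_\alpha}^a)^\ast E_{t_\alpha}^a$ then violates the lower bound on the set $P_{t_\alpha}$, which has positive $m$-measure because $t_\alpha \in D$.

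The main obstacle is not really mathematical but verificational: the Euclidean proof already contains the full logical skeleton, so the work lies in confirming that the background facts transcribe cleanly. The points that deserve checking are (i) the identification $\overline{\Lambda^k} = \overline{\Lambda}^k$, which follows from the fact that Pontryagin duality commutes with finite products; (ii) the existence of a relatively compact Borel fundamental domain $P$ with $0<m(P)<\infty$, recalled from \cite{FG68,KK98}, so that the sets $P_t$ and $Q$ are well-defined and retain the measure-theoretic properties used in the Euclidean case; and (iii) the countability of $\Lambda$, guaranteed by second countability of $\Gamma$, which makes the enumeration-based lexicographic order on $\Lambda^k$ meaningful and ensures the exceptional union $P\setminus Q$ is null. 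Once these are in hand, the two implications above run verbatim, and combining the resulting analogue of Theorem~\ref{equiv-2} with the LCA Proposition~\ref{equiv-1} yields the stated characterization.
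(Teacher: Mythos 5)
Your proposal is correct and follows essentially the same route as the paper: the authors likewise reduce the statement to the LCA version of Proposition~\ref{equiv-1} (quoted from \cite{AAC15}) and then observe that the proof of Theorem~\ref{equiv-2} --- the compactness/continuity argument on $\overline{D}\subseteq\overline{\Lambda}^{k}$ in one direction and the net/eigenvector argument in the other --- carries over verbatim once $\Lambda$ is enumerated and a relatively compact Borel fundamental domain is fixed. Your explicit verification of the transcription points (i)--(iii) is exactly the content the paper leaves implicit.
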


This Theorem gives a practical sufficient condition for the existence of a
structured Riesz basis, which we now describe. Let $Y$ be the subset of $%
\Lambda $ given by%
\begin{equation*}
Y=\{\lambda _{i}(\omega )-\lambda _{j}(\omega ):\omega \in Q,i\neq
j\}=\{\Lambda_{\omega }-\Lambda_{\omega }:\omega \in Q\}\diagdown \{0\}.
\end{equation*}%
When we write $\overline{Y}$ we mean the closure
of $Y$ in $\overline{\Lambda }\subseteq \overline{\Gamma }$. Given $x\in G,$
we let 
\begin{equation*}
{\Ann}(x)=\{\chi \in \overline{\Lambda }:\chi (x)=1\},
\end{equation*}%
the set of characters in $\overline{\Lambda }$ annihilating $x$.

\begin{corollary}
\label{suff}A $k$-tile $\Omega $ in $\Gamma $ has a structured Riesz basis
if there is some $x\in G$ such that $\overline{Y}\bigcap {\Ann}(x)$ is empty.
\end{corollary}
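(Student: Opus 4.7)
My plan is to invoke Theorem \ref{cond-groups} with a Vandermonde ansatz for $a$. Given the element $x\in G$ furnished by the hypothesis, I would choose
\begin{equation*}
a_j=(j-1)x,\qquad j=1,\dots,k.
\end{equation*}
Then for every $\chi=(\chi_1,\dots,\chi_k)\in\overline{\Lambda}^k$ one has $(E_\chi^a)_{r,s}=\chi_r(a_s)=\chi_r(x)^{s-1}$, so $E_\chi^a$ is the Vandermonde matrix in the nodes $z_r:=\chi_r(x)$. Its determinant
\begin{equation*}
\det E_\chi^a=\prod_{r<r'}\bigl(\chi_{r'}(x)-\chi_r(x)\bigr)
\end{equation*}
is nonzero exactly when the $\chi_r(x)$ are pairwise distinct, equivalently when $\chi_r-\chi_{r'}\notin\Ann(x)$ for all $r\neq r'$ (identifying $\overline{\Lambda}$ additively).

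The second step is to show that for every $\chi\in\overline{D}$ and every pair $r\neq r'$, the difference $\chi_r-\chi_{r'}$ lies in $\overline{Y}$. Pick a net $(\lambda_1(\omega_\alpha),\dots,\lambda_k(\omega_\alpha))\in D$ converging to $\chi$ in the Bohr topology of $\overline{\Lambda^k}=\overline{\Lambda}^{\,k}$; convergence in the product is coordinatewise, and since $\overline{\Lambda}$ is a topological group the map $(\mu,\nu)\mapsto\mu-\nu$ is continuous, so $\lambda_r(\omega_\alpha)-\lambda_{r'}(\omega_\alpha)\to\chi_r-\chi_{r'}$. The lexicographic order on $\overrightarrow{\Lambda}_\omega$ makes $\lambda_r(\omega)\neq\lambda_{r'}(\omega)$ for $r\neq r'$, so each approximating difference already belongs to $Y$, forcing $\chi_r-\chi_{r'}\in\overline{Y}$.

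Combining these with the hypothesis $\overline{Y}\cap\Ann(x)=\emptyset$ yields $\chi_r-\chi_{r'}\notin\Ann(x)$ for every $\chi\in\overline{D}$ and every $r\neq r'$, whence $\det E_\chi^a\neq 0$ on all of $\overline{D}$. Theorem \ref{cond-groups} then delivers the structured Riesz basis. There is no serious obstacle here: the substance of the argument is the design choice $a_j=(j-1)x$, which converts the non-vanishing condition on $\det E_\chi^a$ into a distinctness condition on characters evaluated at a single point; once that ansatz is in hand, the Bohr continuity of subtraction and the identification $\overline{\Lambda^k}=\overline{\Lambda}^{\,k}$ recalled in Section~2 do the rest.
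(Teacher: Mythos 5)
Your proposal is correct and follows essentially the same route as the paper: the paper also takes the Vandermonde ansatz (with $a=(x,2x,\dots,kx)$, which differs from your $a_j=(j-1)x$ only by a unimodular column factor that does not affect the vanishing of the determinant), reduces non-vanishing of $\det E_\chi^a$ to $\chi_r-\chi_{r'}\notin\Ann(x)$, and places these differences in $\overline{Y}$ by the same net/continuity argument.
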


\begin{proof}
We consider $a=(x,2x,...,kx)\in G^{k}$. For $\chi =(\chi _{1},...,\chi
_{k})\in \overline{D},$ $E_{\chi }^{a}$ is the Vandermonde matrix $%
(t_{i}^{j})_{i,j=1}^{k}$ where $t_{i}=e_{x}(\chi _{i})= \chi_i(x)$. Hence%
\begin{equation*}
\left\vert \det E_{\chi }^{a}\right\vert = \prod\limits_{i\neq
j}|1-e_{x}(\chi _{i}-\chi _{j})|.
\end{equation*}

Now $\chi \in \overline{D}$ if and only if there is a net $\omega _{\alpha
}\in Q$ such that $\chi _{j}=\lim_{\alpha }\lambda _{j}(\omega _{\alpha })$
for each $j=1,...,k$. Hence $\chi _{i}-\chi _{j}\in $ $\overline{Y}$ for $%
i\neq j$ and thus $e_{x}(\chi _{i}-\chi _{j})\neq 1$. That proves $\det
E_{\chi }^{a}\neq 0$ for any $\chi \in \overline{D}.$
\end{proof}

\begin{example}
\label{admissible}Suppose $G=\Gamma =\mathbb{R}^{d}$ and $\Lambda $ is a
full lattice, say $\Lambda =M(\mathbb{Z}^{d})$ for some invertible matrix $M$%
. Take $P=M([0,1)^{d})$ as the fundamental domain. As in \cite{CC18}, we say 
$\Omega \subseteq \mathbb{R}^{d}$ of finite measure is \emph{admissible} for 
$\Lambda $ if there is some $v\in \mathbb{R}^{d}$ and integer $n$ such that
the numbers $\{v\cdot \lambda :\lambda \in \Lambda _{\omega }\}$ are
distinct integers modulo \thinspace $n$ for a.e. $\omega $. Redefining $Q$
by omitting a set of measure zero, this ensures that if $\Lambda _{\omega
}=\{\lambda _{i}(\omega ):i=1,...,k\},$ then $v\cdot (\lambda _{i}-\lambda
_{j})\in \mathbb{Z\diagdown }n\mathbb{Z}$ for any $i\neq j$ and all $\omega
\in Q$. Hence 
\begin{equation*}
\left\vert 1-e_{v/n}(\lambda _{j}-\lambda _{\ell })\right\vert \geq
\left\vert 1-\exp i2\pi /n\right\vert :=\varepsilon >0\text{ for all }j\neq
\ell \text{.}
\end{equation*}%
Taking limits, it follows that 
\begin{equation*}
\left\vert 1-\chi (v/n)\right\vert =\left\vert 1-e_{v/n}(\chi _{j}-\chi
_{\ell })\right\vert \geq \varepsilon
\end{equation*}%
for all $\chi =(\chi _{j} - \chi _{\ell})\in $ $\overline{Y}$ and thus $%
\overline{Y}\bigcap {\Ann}(v/n)$ is empty. Consequently, any admissible $k$%
-tile has a structured Riesz basis.
\end{example}

In the case of a $2$-tile, the sufficient condition of Corollary \ref{suff}
is also necessary.

\begin{corollary}
\label{2tile}The following are equivalent for a $2$-tile $\Omega $ in $%
\Gamma $.

\begin{enumerate}
\item $\Omega $ has a structured Riesz basis.

\item There is some $x\in G$ such that $\overline{Y}\bigcap {\Ann}(x)$ is empty.

\item There is some $x\in G$ and $\varepsilon >0$ such that $\overline{Y}%
\bigcap $ $\{\chi \in \overline{\Lambda }:|\chi (x)-1|<\varepsilon \}$ is
empty.
\end{enumerate}
\end{corollary}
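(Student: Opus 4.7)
The plan is to apply Theorem \ref{cond-groups} specialized to $k=2$, where the determinant condition becomes extremely transparent, and then to identify the image of $\overline{D}$ under the difference map with $\overline{Y}$. Note that $(2)\Rightarrow(1)$ is already Corollary \ref{suff}, while $(3)\Rightarrow(2)$ is immediate since $\Ann(x) \subseteq \{\chi\in\overline{\Lambda} : |\chi(x)-1|<\varepsilon\}$. So the real work lies in $(1)\Rightarrow(2)$ and $(2)\Rightarrow(3)$.

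For $(1)\Rightarrow(2)$, suppose $a=(a_1,a_2)\in G^2$ witnesses a structured Riesz basis. For any $\chi=(\chi_1,\chi_2)\in\overline{\Lambda}^{2}$, a direct $2\times 2$ expansion yields
\[
\det E_\chi^a \;=\; \chi_1(a_1)\chi_2(a_2)-\chi_1(a_2)\chi_2(a_1) \;=\; \chi_1(a_1)\chi_2(a_1)\bigl[\chi_2(a_2-a_1)-\chi_1(a_2-a_1)\bigr].
\]
Since the prefactor has modulus one, this vanishes if and only if $(\chi_1-\chi_2)(a_2-a_1)=1$, i.e.\ if and only if $\chi_1-\chi_2\in\Ann(x)$ with $x:=a_2-a_1$. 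By Theorem \ref{cond-groups}, condition (1) is then equivalent to the existence of $x\in G$ such that $\chi_1-\chi_2\notin\Ann(x)$ for every $(\chi_1,\chi_2)\in\overline{D}$.

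I would then identify the image of $\overline{D}$ under the continuous homomorphism $\pi:\overline{\Lambda}^{2}\to\overline{\Lambda}$ defined by $\pi(\chi_1,\chi_2)=\chi_1-\chi_2$ with $\overline{Y}$, up to the symmetry $y\mapsto -y$. Compactness of $\overline{\Lambda}^{2}$ together with continuity of $\pi$ gives $\pi(\overline{D})=\overline{\pi(D)}$. For $k=2$ the definition of $Y$ reads $Y=\pi(D)\cup(-\pi(D))$, and taking closures yields $\overline{Y}=\pi(\overline{D})\cup(-\pi(\overline{D}))$. Because $\Ann(x)$ is a subgroup of $\overline{\Lambda}$, it is closed under inversion, so $\overline{Y}\cap\Ann(x)=\emptyset$ if and only if $\pi(\overline{D})\cap\Ann(x)=\emptyset$. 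This gives (2).

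Finally, for $(2)\Rightarrow(3)$, the set $\overline{Y}$ is compact as a closed subset of the compact group $\overline{\Lambda}$, and $\chi\mapsto|\chi(x)-1|$ is continuous on it. By (2) this function is strictly positive on $\overline{Y}$, so it attains a positive minimum $\varepsilon>0$, which produces (3). The only mildly delicate point in the whole argument is the identification $\overline{Y}=\pi(\overline{D})\cup(-\pi(\overline{D}))$, but this is a routine image-of-closure observation once one notes that $Y$ is defined symmetrically.
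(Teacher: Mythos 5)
Your proof is correct and follows essentially the same route as the paper: the $2\times 2$ determinant computation reducing non-vanishing of $\det E_{\chi}^{a}$ to $\chi_{1}-\chi_{2}\notin {\Ann}(a_{2}-a_{1})$, the identification of $\overline{Y}$ with $\pm$ the image of $\overline{D}$ under the difference map, and a compactness argument for $(2)\Rightarrow(3)$. If anything, your explicit verification that $\pi(\overline{D})=\overline{\pi(D)}$ makes precise a step the paper asserts in one line.
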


\begin{proof}
(2 $\Rightarrow $1) was established in the previous corollary and (3 $%
\Rightarrow $2) is immediate.

(1 $\Rightarrow $2) Assume $\Omega $ has a structured Riesz basis with $%
a=(a_{1},a_{2})$. By the Theorem, $\det E_{\chi }^{a}\neq 0$ for any $\chi
\in \overline{D}$. As $E_{\chi }^{a}$ is a $2\times 2$ matrix, 
\begin{equation*}
0\neq \left\vert \det E_{\chi }^{a}\right\vert =\left\vert
1-e_{a_{1}-a_{2}}(\chi _{1}-\chi _{2})\right\vert .
\end{equation*}%
Since $\chi =(\chi _{1},\chi _{2})\in \overline{D}$ if and only if $\pm(\chi
_{1}-\chi _{2})\in $ $\overline{Y},$ we immediately deduce that $\overline{Y}%
\bigcap {\Ann}(a_{1}-a_{2})$ is empty.

(2 $\Rightarrow $3) If (3) fails, then for every $\varepsilon >0$ there is
some $\chi _{\varepsilon }\in $ $\overline{Y}$ such that $|\chi
_{\varepsilon }(x)-1|<\varepsilon $. By compactness the net $\{\chi
_{\varepsilon }\}$ has a limit point $\chi \in $ $\overline{Y},$ and this
character must satisfy $\chi (x)=1$.
\end{proof}

\subsection{Examples with $\protect\varepsilon $-Kronecker sets}

Here we will show how examples of $k$-tiles with structured Riesz bases can
be constructed using the notion of $\varepsilon $-Kronecker sets.

\begin{definition}
Let $\mathcal{G}$ be a compact abelian group and $\Lambda $ its discrete
dual. Let $\varepsilon >0$. A subset $E\subseteq \Lambda $ is called an $%
\varepsilon $\emph{-Kronecker set }if for all choices of complex scalars $%
(t_{\chi })_{\chi\in E},$ with $\left\vert t_{\chi }\right\vert =1,$ there is some $g\in 
\mathcal{G}$ such that $\left\vert \chi (g)-t_{\chi }\right\vert
<\varepsilon $ for all $\chi \in E$. The infimum of such $\varepsilon $ is
called the $\varepsilon $\emph{-Kronecker constant} of the set $E$.
\end{definition}

Examples of infinite $\varepsilon $-Kronecker sets include lacunary sets  $%
\{n_{j}\}_{j=1}^{\infty }\subseteq \mathbb{N}$ with lacunary ratio $q>2$.
These are  sets with $\inf_{j}n_{j+1}/n_{j}\geq q$, such as the set $%
\{3^{j}\}_{j=0}^{\infty }$ with $q=3$. The $\varepsilon $-Kronecker constant
depends on the lacunary ratio $q$ and tends to $0$ as $q\rightarrow \infty $%
. For background material on this class of sets we refer the reader to \cite%
{GH}.

\begin{proposition}\label{kronecker}
Suppose $G=$ $\Gamma =\mathbb{R}$, $\Lambda =H=\mathbb{Z}$ and $\Omega $ is
a $k$-tile of $\Lambda $. Assume that for every $\omega \in Q$ we have $%
\Lambda _{\omega }=\{0,\lambda _{2}(\omega ),...,\lambda _{k}(\omega
)\}\subseteq \mathbb{Z}^{+}$ and that if $\Lambda _{\omega }\neq \Lambda
_{\nu },$ then $\Lambda _{\omega }\bigcap \Lambda _{\nu }=\{0\}$. If $%
\bigcup_{\omega \in Q}\Lambda _{\omega }\diagdown \{0\}$ is an $%
\varepsilon $-Kronecker set in $\mathbb{N}$ for $\varepsilon =\varepsilon
(k)>0$ sufficiently small, then $\Omega $ has a structured Riesz basis of
exponentials.
\end{proposition}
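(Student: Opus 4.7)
The plan is to apply Corollary \ref{suff}, producing an $x \in \mathbb{R}$ for which $\overline{Y} \bigcap \Ann(x)$ is empty. I will obtain $x$ from the $\varepsilon$-Kronecker property by prescribing the values of $n \mapsto e^{2\pi i x n}$ at each element of $E := \bigcup_{\omega \in Q} \Lambda_\omega \setminus \{0\}$, in such a way that every difference in $Y$ is forced to have a value bounded away from $1$.

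First I would fix $k-1$ distinct unit complex numbers $\zeta_2, \dots, \zeta_k$, none equal to $1$, and set
\begin{equation*}
\delta := \min\bigl\{ |1 - \zeta_i|,\ |\zeta_i - \zeta_j| \,:\, 2 \leq i,j \leq k,\ i \neq j \bigr\} > 0.
\end{equation*}
The hypothesis that $\Lambda_\omega \bigcap \Lambda_\nu = \{0\}$ whenever $\Lambda_\omega \neq \Lambda_\nu$ makes the prescription
\begin{equation*}
t_n := \zeta_i \quad \text{whenever } n = \lambda_i(\omega) \text{ for some } \omega \in Q \text{ and } i \in \{2, \dots, k\}
\end{equation*}
an unambiguous assignment of unit scalars on $E$. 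The $\varepsilon$-Kronecker property then provides $x \in \mathbb{R}$ with $|e^{2\pi i x n} - t_n| < \varepsilon$ for every $n \in E$.

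Next I would verify that $|1 - e^{2\pi i x y}| \geq \delta - 2\varepsilon$ for every $y \in Y$. Each such $y$ has one of the forms $\lambda_i(\omega)$, $-\lambda_j(\omega)$, or $\lambda_i(\omega) - \lambda_j(\omega)$ with $i,j \in \{2,\dots,k\}$ and $i \neq j$, and a routine triangle-inequality computation shows that $e^{2\pi i x y}$ lies within $2\varepsilon$ of $\zeta_i$, $\overline{\zeta_j}$, or $\zeta_i \overline{\zeta_j}$, respectively. Each of these three limit points has modulus $1$ and differs from $1$ by at least $\delta$. Choosing $\varepsilon(k) < \delta/2$ secures the uniform lower bound on $Y$, and the Bohr continuity of $\chi \mapsto \chi(x)$ transfers it to $\overline{Y}$. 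Hence $\overline{Y} \bigcap \Ann(x) = \emptyset$ and Corollary \ref{suff} delivers the structured Riesz basis.

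The only delicate step is the well-definedness of the prescription $n \mapsto t_n$: without the hypothesis that distinct $\Lambda_\omega$ meet only at $0$, two different $\omega$'s could demand incompatible values $\zeta_i$ and $\zeta_{i'}$ at the same $n \in E$, and the construction would break down. Once this coherence is secured, the remainder is a short triangle-inequality estimate combined with the standard continuity of evaluation on the Bohr compactification.
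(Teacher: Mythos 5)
Your proof is correct, but it follows a genuinely different route from the paper's. The paper works directly with the determinant criterion of Theorem \ref{equiv-2}: it fixes an explicit invertible $\pm 1$ target matrix $M$ (row $i$ consisting of $k+1-i$ ones followed by $i-1$ minus ones), applies the $\varepsilon$-Kronecker property $k-1$ times --- once per column --- to produce separate points $g_2,\dots,g_k\in[0,1]$ with $|e_{g_j}(\lambda_i(\omega))-M_{ij}|<\varepsilon$, and concludes by a continuity-of-determinant argument that $|\det E_\omega^g|\geq |\det M|/2$ uniformly, hence also on $\overline D$. You instead verify the hypothesis of Corollary \ref{suff}: a single application of the Kronecker property, with targets $\zeta_i$ chosen as distinct unimodular scalars different from $1$, produces one point $x$ for which every difference in $Y$ evaluates within $2\varepsilon$ of $\zeta_i$, $\overline{\zeta_j}$ or $\zeta_i\overline{\zeta_j}$, so that $\overline{Y}\bigcap{\Ann}(x)=\emptyset$; the Vandermonde structure is then hidden inside Corollary \ref{suff}. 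Both arguments hinge on the same coherence point you correctly isolate --- the hypothesis that distinct $\Lambda_\omega$'s meet only at $0$ is exactly what makes the interpolation targets well defined --- and both need $\varepsilon(k)$ small depending only on $k$. Your version buys an explicit admissible threshold (taking the $\zeta_i$ to be nontrivial $k$-th roots of unity gives $\varepsilon(k)<\sin(\pi/k)$, say) and a Riesz spectrum of the particularly simple arithmetic-progression form $a=(x,2x,\dots,kx)$; the paper's version stays closer to the general determinant criterion and does not need the evaluations at a single $x$ to separate all the $\zeta_i$ simultaneously, only that each column of $E_\omega^g$ approximate the corresponding column of $M$. One small remark: your triangle-inequality step $|ab-cd|\leq|b-d|+|a-c|$ for unimodular $a,b,c,d$ is exactly what is needed and is worth displaying, since it is the only estimate in the argument.
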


\begin{proof}
Let $f_{l}^{+}$ denote the $l$-vector of $1$'s and $f_{l}^{-}$ denote the $l$%
-vector of $-1$'s$.$ Notice that the $k\times k$ matrix $M=(M_{ij})$ with
row $i$ equal to $(f_{k+1-i}^{+},f_{i-1}^{-})$ is an invertible matrix. Fix $%
\delta =|\det M|>0$.

A continuity argument shows that if $M({\varepsilon })=(M_{ij}(\varepsilon ))$
is any $k\times k$ matrix satisfying $\sup_{ij}|M_{ij}(\varepsilon
)-M_{ij}|\leq \varepsilon $ for suitably small $\varepsilon =\varepsilon
(k)>0$, then $|\det M(\varepsilon )|\geq \delta /2$.

Let $\overrightarrow\Lambda_{\omega }=(0,\lambda _{2}(\omega ),...,\lambda _{k}(\omega
))$. For each $j=2,...,k$, choose $g_{j}\in \mathbb{[}0,1]$ such that 
\begin{equation*}
\left\vert e_{g_{j}}(\lambda _{i}(\omega ))-M_{ij}\right\vert <\varepsilon 
\end{equation*}%
for each $\lambda _{i}(\omega )\in \bigcup\limits_{\omega \in Q}\Lambda
_{\omega }\diagdown \{0\}$. Put $g_{1}=0$ and $g=(g_{1},...,g_{k})$. Such $%
g_{j}$ can be found because $\bigcup \Lambda _{\omega }\diagdown \{0\}$ is
an $\varepsilon $-Kronecker set and if $\lambda _{i}(\omega )=\lambda
_{j}(\nu )$ for some $i,j,$ then we must have $\Lambda _{\omega }=\Lambda
_{\nu }$.

By construction, $E_{\omega }^{g}$ is a matrix $M({\varepsilon })$ as
described above and hence $\left\vert \det E_{\omega }^{g}\right\vert \geq
\delta /2$ for all $\omega \in Q.$ The same conclusion holds for all $\chi
\in \overline{D},$ hence by Theorem \ref{equiv-2} the set $\Omega $ has a structured
Riesz basis.
\end{proof}

In \cite{CC18}, there was an example of a $k$-tile with respect to $\Z$ in $\R$, that was not admissible but had a structured Riesz basis. Here,
 using Proposition \ref{kronecker}, we construct a different example using $\varepsilon$-Kronecker sets.
 
\begin{example}
Choose $\varepsilon =\varepsilon (k)$ from the Proposition and select $q$
sufficiently large so that any lacunary set $\{n_{j}\}$ with lacunary ratio $%
\geq q$ has Kronecker constant less than $\varepsilon (k)$. Put $n_{1}=1$
and inductively choose integers $n_{j+1}>q^{k}(n_{j}+1)$. Let $\Omega
_{0}=[0,1)$ and for $j\geq 1,$ let 
\begin{equation*}
\Omega
_{j}=\bigcup\limits_{i=0}^{k-1}[q^{i}n_{j}+1-2^{-(j-1)},q^{i}n_{j}+1-2^{-j}).
\end{equation*}%
Put $\Omega =\bigcup_{j=0}^{\infty }\Omega _{j}$. Notice that the
choice of $\{n_{j}\}$ ensures that $n_{j+1}>q^{k-1}n_{j}+1$ so the intervals 
$\Omega _{j}$ are disjoint. 
If $\omega \in \lbrack 0,1/2),$ then $\Lambda _{\omega
}=\{0,n_{1},qn_{1},...,q^{k-1}n_{1}\}$. More generally, for $\omega \in
\lbrack 1-2^{-(i-1)},1-2^{-i})$, 
\begin{equation*}
\Lambda _{\omega }=\{0,n_{i},qn_{i},...,q^{k-1}n_{i}\},
\end{equation*}%
so this is a $k$-tile and for every $\omega ,\nu $ either $\Lambda _{\omega
}=\Lambda _{\nu }$ or $\Lambda _{\omega }\bigcap \Lambda _{\nu }=\{0\}$.

Since each $\Lambda _{\omega }\diagdown \{0\}$ is a $q\,$-lacunary set and $%
n_{j+1}\geq q(q^{k-1}n_{j})$, the set $\bigcup \Lambda _{\omega }\diagdown
\{0\}$ is $\varepsilon (k)$-Kronecker.

Thus $\Omega $ has a structured Riesz basis. If, in addition, we choose each 
$n_{j}$ to be a multiple of $j,$ then it is not admissible.
\end{example}

The sets $\{\chi \in \overline{\Lambda }:|\chi (x)-1|<\varepsilon \}$ for $%
x\in G$ and $\varepsilon >0$ form a subbase for the Bohr topology on $%
\overline{\Lambda }$ at the identity character $0$. In light of Cor. \ref%
{2tile}, it is natural to ask if $2$-tiles with structured Riesz bases could
be characterized as those for which $0\notin \overline{Y}$. Our final
example shows that this is not true.

\begin{example}
Write $(1+2\mathbb{N)}\bigcup \{m!\}_{m=2}^{\infty }=\{n_{j}\}_{j=1}^{\infty
}$ where $n_{j+1}>n_{j}$. 
Take $\Omega =\bigcup_{j=0}^{\infty }\Omega_{j}$ where
 $\Omega_0 =[0,1), \text {and for  } j \geq 1$,
\begin{equation*}
\Omega _{j}=[n_{j}+1-2^{-(j-1)},n_{j}+1-2^{-j}).
\end{equation*}%
The set $\Omega$ clearly is a $2$-tile and each $\Lambda _{\omega }=\{0,n_{i}\}$ when $%
\omega \in \lbrack 1-2^{-(i-1)},1-2^{-i})$. Thus $Y=\pm \{n_{j}\}_{j}$. The
set $\{m!\}_{m\geq 2}$ is an $\varepsilon $-Kronecker set for some $\varepsilon  <  2$, being a lacunary
set with ratio $q=3,$ and it is known that $0$ is not in the Bohr closure of
such sets, \cite[p. 38]{GH}. The set $1+2\mathbb{N}$ does not have $0$ in
its closure since $e_{2n+1}(1/2)=-1$ for all $n\in \mathbb{Z}$. Thus $0$ $%
\notin \overline{Y}$. However, we claim there is no $x$ such that ${\Ann}(x)$
is disjoint from $\overline{Y},$ so by Corollary \ref{2tile} the set  $\Omega $ does
not have a structured Riesz basis.

To prove this claim, we argue as follows\footnote{%
We thank Tom Ramsey for showing us this argument.}. First, consider $x\notin 
\mathbb{Q}$. Take a basis for $\mathbb{R}$ over $\mathbb{Q}$ with $1/2$ and $%
x$ as two of its elements. There will be a (discontinuous) character that is 
$1$ at $x$ (so in ${\Ann}(x)$), but $-1$ at $1/2$. This character is not in $%
\overline{2\mathbb{Z}}$ as such characters must map $1/2$ to $1$, and hence
belongs to $\overline{(1+2\mathbb{Z)}}$. Now suppose $x=a/b\in \mathbb{Q}$.
Then $e_{m!}(x)=1$ for any $m\geq b$ and so $\{m!\}\bigcap {\Ann}(a/b)\neq
\emptyset $. Thus $0\notin \overline{Y}$ and yet $\Omega $ does not admit a
structured Riesz basis.
\end{example}



\begin{thebibliography}{99}
\bibitem{AAC15} Agora, E., Antezana, J. and Cabrelli, C. \emph{Multi-tiling
sets, Riesz bases, and sampling near the critical density in LCA groups,}
Advances in Math., 285, pp. 454-477, (2015).

\bibitem{CC18} Cabrelli,~C. and Carbajal,~D. \emph{Riesz bases of
exponentials on unbounded multi-tiles,} Proc. Amer. Math. Soc., 146(5), pp
1991-2004, (2018).

\bibitem{DE} Deitmar, A. and Echteroff, S. \emph{Principles of harmonic
analysis}, Springer, New York, (2009).

\bibitem{FG68} Feldman J. and Greenleaf F.P. \emph{Existence of Borel
transversal in groups}. Pacific J. Math., 25(3), pp. 455-461, (1968).

\bibitem{GH} Graham, C. C. and Hare, K. E., \emph{Interpolation and Sidon
sets for compact groups}, CMS books in Mathematics, Springer, New York,
(2013).

\bibitem{Fug74} Fuglede, B. \emph{Commuting self-adjoint partial
differential operators and a group theoretic problem}, J. Func. Anal., 16,
pp. 101-121, (1974).

\bibitem{GL14} Grepstad, S. and Lev, N. \emph{Multi-tiling and Riesz bases},
Advances in Math., 252, pp. 1-6, (2014)$.$

\bibitem{GL18} Grepstad, S. and Lev, N. \emph{Riesz bases, Meyer's
quasicrystals, and bounded remainder sets}, Trans. Amer. Math. Soc., 370(6),
pp. 4273-4298, (2018).

\bibitem{HR63} Hewitt,~E. and Ross, K. A. \emph{Abstract Harmonic Analysis I}%
, Springer-Verlag, Berlin, Heidelberg, New York, (1963).

\bibitem{Iosxx} Iosevich,~ A., \emph{Fuglede conjecture for lattices},
preprint available at
www.math.rochester.edu \\
/people/faculty/iosevich/expository/FugledeLattice.pdf.


\bibitem{KN15} Kozma, G. and Nitzan, S. \emph{Combining Riesz bases},
Invent. Math., 199(1), pp. 267-285, (2015).

\bibitem{KN16} Kozma, G. and Nitzan, S. \emph{Combining Riesz bases in $%
\mathbb{R}^{d}$}, Revista Matem. Iberoamericana, 32(4), pp. 1393-1406, (2016).

\bibitem{Kol15} Kolountzakis, M. \emph{Multiple lattice tiles and Riesz
bases of exponentials}, Proc. Amer. Math. Soc., 143(2), pp. 741-747, (2015).

\bibitem{Kol00} Kolountzakis, M. \emph{Non-symmetric convex domains have no
basis of exponentials}, Ilinois J. Math., 44(3), pp. 542-550, (2000).

\bibitem{Kol01} Kolountzakis,~M. \emph{The study of translational tiling
with Fourier Analysis}, Lectures given at the Workshop on Fourier Analysis
and Convexity, Universita di Milano-Bicocca, pp. 11-22, (2001).

\bibitem{KK98} Kaniuth E. and Kutyniok G. \emph{Zeros of the Zak transform
on locally compact abelian groups}, Amer. Math. Soc., 126(12), pp.
3561-3569, (1998).

\bibitem{Lab01} Laba,~I. \emph{Fuglede's conjecture for a union of two
intervals}, Proc. Amer. Math. Soc., 129(10), pp. 2965-2972, (2001).

\bibitem{Mar06} Marzo,~J. \emph{Riesz basis of exponentials for a union of
cubes in $\mathbb{R}^{d}$}, arXiv:math/0601288, (2006).

\bibitem{OU16} Olevskii,~A and Ulanovskii,~A. \emph{Functions with
disconnected spectrum, } University Lecture Notes 65, AMS, (2016).

\bibitem{Rud62} Rudin,~W. \emph{Fourier analysis on groups}, Interscience
tracts in Pure and Applied Math., 12, Wiley and Sons, New York, (1962).

\bibitem{You01} Young, R.M. \emph{An introduction to nonharmonic Fourier
series}, Academic Press, Inc., San Diego, CA, (2001).
\end{thebibliography}
\end{document}